\providecommand{\norm}[1]{\ensuremath{\left\lVert#1\right\rVert }}
\providecommand{\mnorm}[1]{\ensuremath{\left\lvert#1\right\rvert}}
\def\R{\mathbb{R}}
\def\Na{\mathbb{N}}
\def\O{\mathcal{O}}
\def\V{\mathcal{V}}
\def\D{\mathcal{D}}
\newtheorem{theorem}{\bfseries Theorem}
\newtheorem{lemma}{\bfseries Lemma}
\newtheorem{assumption}{\bfseries Assumption}
\title{\LARGE \bf On Convergence of the \\ Iteratively Preconditioned Gradient-Descent (IPG) Observer
}
\author{Kushal Chakrabarti$^1$, and Nikhil Chopra$^2$
\thanks{$^1$ Tata Consultancy Services Research, Mumbai 400607, India}
\thanks{$^2$ University of Maryland, College Park, Maryland 20742, U.S.A.}
\thanks{Emails: {\em chakrabarti.k@tcs.com}, and {\em nchopra@umd.edu}}%
}
\begin{document}

\maketitle
\thispagestyle{empty}
\pagestyle{empty}


\begin{abstract}
This paper considers the observer design problem for discrete-time nonlinear dynamical systems with sampled measurement data. Earlier, the recently proposed Iteratively Preconditioned Gradient-Descent (IPG) observer, a Newton-type observer, has been empirically shown to have improved robustness against measurement noise than the prominent nonlinear observers, a property that other Newton-type observers lack. However, no theoretical guarantees on the convergence of the IPG observer were provided. This paper presents a rigorous convergence analysis of the IPG observer for a class of nonlinear systems in deterministic settings, proving its local linear convergence to the actual trajectory. Our assumptions are standard in the existing literature of Newton-type observers, and the analysis further confirms the relation of the IPG observer with the Newton observer, which was only hypothesized earlier.
\end{abstract}
\begin{keywords}
Observers for nonlinear systems, Optimization algorithms
\end{keywords}
\section{Introduction}
\label{sec:intro}

This paper considers the observer design problem for a nonlinear discrete-time system with sampled measurements. Let $x_k \in \R^n$, $u_k \in \R^m$, and $y_k \in \R^p$ respectively denote the system state, the input to the system, and the observed measurement at the $k^{th}$ sampling instant. Then, for each $k \in \Na$, the dynamical system is described by
\begin{align}
    x_{k+1} = F(x_k, u_k), \, y_k & = h(x_k), \label{eqn:meas_eqn}
\end{align}
where the state-dynamics function $F: (\R^n,\R^m) \to R^n$ and the output function $h: \R^n \to \R^p$ are nonlinear. At each $k \geq N$, the observer has access to the past $N \in \mathbb{N}$ consecutive measurements $Y_k = \left[y_{k-N+1}^T,\ldots,y_k^T\right]^T \in \R^{N p}$ and $N-1$ inputs $U_k = \left[u_{k-N+1}^T,\ldots,u_{k-1}^T\right]^T \in \R^{(N-1) m}$. Considering a set of past measurements and inputs is common among Newton-type observers~\cite{moraal1995observer,biyik2006hybrida, biyik2006hybridb, hanba2008numerical} and moving horizon estimation problems~\cite{alessandri2010advances, alessandri2017fast}. 

There is a vast literature of other classes of nonlinear observers, including~\cite{astolfi2006global, niazi2023learning, khalil2014high, menini2021design, shim2015nonlinear, shamma1997approximate,karafyllis2009continuous, beikzadeh2016input, ahmed2020sampled, kashefi2022new}. A few notable ones are the exact linearization-based~\cite{astolfi2006global, niazi2023learning}, high-gain observers~\cite{khalil2014high, menini2021design}, and qDES observer framework~\cite{shim2015nonlinear}. The computational complexity of exact linearization-based observers increases with the dimension $n$ and the truncation order of a Taylor series approximation. Recently,~\cite{niazi2023learning} addressed these computational challenges using a learning-based technique and analyzed its robustness against process and system noise. However, the observer in~\cite{niazi2023learning} is for continuous-time dynamics and measurements. The high-gain observers require continuous-time measurements. Constructing a qDES observer, which considers noise in the system, relies on choosing suitable storage and class-$\mathcal{K}$ functions satisfying certain conditions~\cite{shim2015nonlinear}. Newton and approximate Newton observers are broadly applicable and converge fast in deterministic settings. However, these Newton-type observers result in a large steady-state error, even instability, in the presence of measurement noise. 

To address the noise-sensitivity of Newton-type observers, we recently proposed the IPG observer~\cite{chakrabarti2023ipg}, utilizing the idea of iterative pre-conditioning of the gradient-descent algorithm~\cite{chakrabarti2021accelerating}. The IPG observer is a Newton-type observer that estimates the states of the discrete-time nonlinear system~\eqref{eqn:meas_eqn} from a moving window of the past $N$ measurements and inputs. We introduce the following notation to review the IPG observer~\cite{chakrabarti2023ipg}. For simplicity, we define $F^{u_k}(x_k) = F(x_k,u_k)$ for each $k \in \Na$ and let $\circ$ denote composition of functions.
Recall the past $N$ measurements vector $Y_k = \left[y_{k-N+1}^T,\ldots,y_k^T\right]^T$. Then, from~\eqref{eqn:meas_eqn}, for each $k \geq N$,
\scalebox{0.95}{\parbox{1\linewidth}{
\begin{align}
    Y_k \hspace{-0.2em} = \hspace{-0.3em} \begin{bmatrix} h(x_{k-N+1}) \\ \vdots \\ h \circ F^{u_{k-1}} \circ \ldots \circ F^{u_{k-N+1}}(x_{k-N+1}) \end{bmatrix} \hspace{-0.3em} =: \hspace{-0.2em} H^{U_k}(x_{k-N+1}), \hspace{-0.2em} \label{eqn:meas_vec_full}
\end{align}
}}
where $H^{U_k} : \R^n \to \R^{N p}$ is known as the {\em observability mapping} of~\eqref{eqn:meas_eqn}. The IPG observer is iterative, wherein at each sampling instant $k \geq N$, it applies $d$ IPG iterations indexed by $i=0,\ldots,d-1$. At each $i$ and $k$, it maintains an estimate $w_k^{(i)} \in \R^n$ of $x_{k-N+1}$, an estimate $\hat{x}_k \in \R^n$ of the true state $x_k$, and a matrix $K_k^{(i)} \in \R^{n \times n}$. Before initiating the observer, it chooses an initial estimate $w_N^{(0)} \in \R^n$, an initial pre-conditioner $K_N^{(0)} \in \R^{n \times n}$, and the parameter $d \in \Na$. It executes the following steps at each $k \geq N$.
If $n=Np$, then in the same iteration $i=0,\ldots,d-1$,
\begin{align}
    K_k^{(i+1)} & = K_k^{(i)} - \alpha^{(i)} \left(H^{U_k}_x (w_k^{(i)}) K_k^{(i)} - I\right), \label{eqn:K_sq} \\
    w_k^{(i+1)} & = w_k^{(i)} - \delta^{(i)} K_k^{(i)} \left(H^{U_k}(w_k^{(i)}) - Y_k\right), \label{eqn:ipg_sq}
\end{align}
where $H^U_x(\cdot) := \frac{\partial H^U}{\partial x} (\cdot)$.
From $w_k^{(d)}$, after the above $d$ IPG iterations, the estimate $\hat{x}_k$ of $x_k$ is obtained by propagating:
\begin{align}
    \hat{x}_k & = F^{u_{k-1}} \circ \ldots \circ F^{u_{k-N+1}}(w_k^{(d)}). \label{eqn:newton_est}
\end{align}
The initialization of the IPG iterations in the next sampling period $k+1$ is set by propagating $w_k^{(d)}$ forward:
\begin{align}
    w_{k+1}^{(0)} & = F^{u_{k-N+1}}(w_k^{(d)}), \label{eqn:newton_init} \\
    K_{k+1}^{(0)} & = K_k^{(d)}. \label{eqn:ipg_init}
\end{align}
We assume $n = Np$, i.e., $H_x^{U}$ is a square matrix. Comparing the fixed point of~\eqref{eqn:K_sq}-\eqref{eqn:ipg_sq} above with (8)-(9) of~\cite{chakrabarti2023ipg}, the extension of our result to the case of $n \neq Np$ is straightforward, by replacing $(H^{U_k}_x)^{-1}$ with its pseudo-inverse.

Our work in~\cite{chakrabarti2023ipg} (i) hypothesized that under appropriate conditions, the IPG observer converges to the Newton observer as the sampling instant $k \to \infty$ and the number of iterations $i \to \infty$ within each $k$, and (ii) empirically showed its improved robustness against measurement noise than the prominent nonlinear observers that apply to general nonlinear functions $F,h$. \underline{However, no theoretical guarantees were given in~\cite{chakrabarti2023ipg}.}

{\bf Summary of Our Contributions}:
Therefore, this work presents a formal convergence analysis of the IPG observer for a class of nonlinear discrete-time systems~\eqref{eqn:meas_eqn} without process or measurement noise. This class of systems~\eqref{eqn:meas_eqn} is identified by our set of assumptions, presented later in Section~\ref{sub:assump}, which are standard for Newton-type observers~\cite{moraal1995observer, biyik2006hybrida, biyik2006hybridb}. Notably, we prove local {\em linear} convergence of the IPG observer in Section~\ref{sec:conv}. We note that the {\em linear} convergence rate of the Newton observer and approximate Newton observers~\cite{moraal1995observer, biyik2006hybrida, biyik2006hybridb} are also local and do not study the impact of measurement noise. The global asymptotic convergence of the Newton observer has been proved only when the observer includes continuous-time dynamics of the auxiliary variable. From our convergence analysis, we also prove the hypothesis in~\cite{chakrabarti2023ipg} that under appropriate assumptions, the IPG observer converges to Newton observer as the sampling instant $k \to \infty$ and the number of iterations $i \to \infty$ within each $k$. A detailed empirical comparison with prominent observers can be found in our prior work~\cite{chakrabarti2023ipg}.


\section{Assumptions and Prior Results}
\label{sec:prelim}


\subsection{Assumptions}
\label{sub:assump}


\begin{assumption} \label{assump:invar}
There exists a convex and compact subset $\O \subset \R^n$ and a compact subset $\V \subset \R^m$ such that for each $x \in \O$ there exists $u \in \V$ such that $F(x,u) \in \O$. Moreover, the controls are applied so that $F(x,u) \in \O$.
\end{assumption}

\begin{assumption} \label{assump:obsv}
There exists an integer $N \in [1,n]$ such that the set of algebraic equations~\eqref{eqn:meas_vec_full} has (i) has $n = N p$; (ii) is uniformly $N$-observable with respect to (w.r.t.) $\O, \V^{N-1} := \underbrace{\V \times \ldots \times \V}_{N-1 \, \text{times}}$; (iii) satisfies the $N$-observability rank condition w.r.t. $\O,\V^{N-1}$.
\end{assumption}

\begin{assumption} \label{assump:diff}
$F$ and $h$ are twice continuously differentiable w.r.t. its first argument over $\R^n$.
\end{assumption}






\begin{assumption} \label{assump:eigen}
Eigenvalues of $H^U_x$ are positive in $\R^n$.
\end{assumption}

Next, we provide the justifications for the above assumptions and discuss their implications, which will be useful in presenting our main result. Assumptions~\ref{assump:invar}-\ref{assump:diff} are standard. Particularly, Assumption~\ref{assump:invar} implies that the system states are bounded and lie within a controlled-invariant subspace, which is standard for Newton-type observers~\cite{moraal1995observer, biyik2006hybrida, biyik2006hybridb}. Assumptions~\ref{assump:obsv}-\ref{assump:diff} are also from~\cite{moraal1995observer, biyik2006hybrida, biyik2006hybridb}. Specifically, (ii) and (iii) of Assumption~\ref{assump:obsv} are jointly equivalent to: for each $U_k \in \V^{N-1}$, $H^{U_k}: \O \to \R^{Np}$ is an {\em injective immersion}~\cite{moraal1995observer}. Assumption~\ref{assump:obsv} leads to invertible $H^U_x$ over $\O$, which is also assumed in~\cite{biyik2006hybrida, biyik2006hybridb}. As discussed earlier in Section~\ref{sec:intro}, extending our result to the non-square case is straightforward. 

\begin{lemma} \label{lem:assump}
Let $W \subset \R^n$ be any compact and convex set. Under Assumptions~\ref{assump:invar}-\ref{assump:diff}, the following hold.
\begin{itemize}[leftmargin=*]
    \item $\exists L > 0$ such that $\norm{F^u(x) - F^u(y)} \leq L \norm{x-y}$ for any $x,y \in W$.
    \item $\exists l, \gamma > 0$ such that $\norm{H^U(x) - H^U(y)} \leq l \norm{x-y}$ and $\norm{H^U_x(x) - H^U_x(y)} \leq \gamma \norm{x-y}$ for any $x,y \in W$.
    \item The largest eigenvalue $\lambda_{\max} \left[H^U_x (x) \right] \leq \Lambda$ for $x \in W$.
    \item $\exists L_2 > 0$ such that $\norm{(H^U_x(x))^{-1} - (H^U_x(y))^{-1}} \leq L_2 \norm{x-y}$ for any $x,y \in \O$.
\end{itemize}
\end{lemma}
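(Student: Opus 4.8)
The plan is to obtain all four claims from two elementary facts: a continuous function on a compact set is bounded, and a continuously differentiable map with bounded Jacobian is Lipschitz on a convex set by the mean-value inequality. The first three assertions are then routine regularity consequences of the smoothness in Assumption~\ref{assump:diff} together with compactness of $W$, whereas the fourth additionally relies on the invertibility of $H^U_x$ over $\O$ supplied by Assumption~\ref{assump:obsv}.

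For the Lipschitz bounds, I would first note that by Assumption~\ref{assump:diff} each $F^u$ is $C^2$ in $x$, and since every block of $H^U$ in~\eqref{eqn:meas_vec_full} is a finite composition of such maps with $h$, the map $H^U$ is $C^2$ as well. Consequently the Jacobians $F^u_x$ and $H^U_x$ are continuous and therefore attain finite maxima of their operator norms on the compact set $W$; applying the mean-value inequality along the segments contained in the convex set $W$ yields the constants $L$ and $l$. Since $H^U$ is $C^2$, the matrix-valued map $H^U_x$ is $C^1$, so the same reasoning applied to it gives the Lipschitz constant $\gamma$. The third claim follows because $\lambda_{\max}[H^U_x(x)] \leq \norm{H^U_x(x)}$ and the right-hand side is continuous in $x$, hence bounded on $W$ by some $\Lambda$.

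The fourth claim is where the main difficulty lies, and it is precisely why this item is restricted to $\O$ rather than a generic $W$. By Assumption~\ref{assump:obsv}, $H^U$ is an injective immersion on $\O$, so with $n = Np$ the square matrix $H^U_x(x)$ is invertible at every $x \in \O$. The crucial step is a \emph{uniform} bound on the inverse: the smallest singular value $\sigma_{\min}[H^U_x(x)]$ is continuous in $x$ and strictly positive on $\O$, so by compactness it attains a positive minimum $\sigma_0$, giving $\norm{(H^U_x(x))^{-1}} \leq 1/\sigma_0 =: M$ for all $x \in \O$. Combining this with the resolvent identity $A^{-1} - B^{-1} = A^{-1}(B-A)B^{-1}$ evaluated at $A = H^U_x(x)$ and $B = H^U_x(y)$ gives
\begin{align*}
    \norm{(H^U_x(x))^{-1} - (H^U_x(y))^{-1}} &\leq M^2 \norm{H^U_x(x) - H^U_x(y)} \\
    &\leq M^2 \gamma \norm{x - y},
\end{align*}
using the constant $\gamma$ above, so that $L_2 = M^2 \gamma$ suffices. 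The principal obstacle is exactly this uniform positivity of $\sigma_{\min}[H^U_x]$ on $\O$: it is the single place where observability, not merely smoothness and compactness, is indispensable, since without the uniform lower bound the resolvent estimate would break down.
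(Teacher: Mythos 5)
Your proposal is correct, and for the first three items it coincides with the paper's argument: the paper invokes Lemma~3.2 of Khalil's \emph{Nonlinear Systems}, which is exactly the mean-value inequality for $C^1$ maps with Jacobians bounded on a compact convex set, and the eigenvalue bound likewise follows from continuity of $H^U_x$ on compact $W$. Where you genuinely diverge is the fourth item. The paper differentiates the identity $H^U_x (H^U_x)^{-1} = I$ to obtain $\frac{\partial}{\partial x_i}(H^U_x)^{-1} = -(H^U_x)^{-1}\frac{\partial}{\partial x_i}H^U_x\,(H^U_x)^{-1}$, argues that this derivative is continuous on $\O$ (using twice continuous differentiability of $H^U$ and invertibility of $H^U_x$ there), and then applies Khalil's lemma once more to the map $x \mapsto (H^U_x(x))^{-1}$. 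You instead use the resolvent identity $A^{-1}-B^{-1}=A^{-1}(B-A)B^{-1}$ together with a uniform lower bound $\sigma_{\min}[H^U_x] \geq \sigma_0 > 0$ on the compact set $\O$, yielding the explicit constant $L_2 = M^2\gamma$ with $M = 1/\sigma_0$. Both routes are valid and rest on the same two ingredients (compactness of $\O$ and invertibility from Assumption~\ref{assump:obsv}); your version is slightly more economical in that it needs only Lipschitz continuity of $H^U_x$ (already established via $\gamma$) rather than differentiability of the inverse map, and it produces an explicit Lipschitz constant, whereas the paper's version reuses its single workhorse lemma uniformly across all four items. Your observation that the uniform positivity of $\sigma_{\min}$ is the one place where observability, rather than smoothness alone, enters is accurate and matches the paper's remark that Assumption~\ref{assump:obsv} is what guarantees invertibility of $H^U_x$ over $\O$.
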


\begin{proof}
Assumption~\ref{assump:diff} implies that $H^U$ and $F$ are twice continuously differentiable over $\R^n$. So, Using Lemma~3.2 of~\cite{khalil1996nonlinear}, $H^U$, $H^U_x$, and $F^u$ are Lipschitz continuous on $W$. 

Since $H^U_x$ is continuous, it is bounded over a compact set $W$. So, its eigenvalues are bounded over $W$. 

Under Assumptions~\ref{assump:invar}-\ref{assump:obsv}, upon differentiating both sides of $H^U_x (H^U_x)^{-1} = I$, we have $\frac{\partial}{\partial x_i} (H^U_x)^{-1} = - (H^U_x)^{-1} \frac{\partial}{\partial x_i} H^U_x (H^U_x)^{-1}$ over $\O$, $\forall i$. Since $(H^U_x)^{-1}$ exists over $\O$ and $H^U$ is twice continuously differentiable, the above implies that $\frac{\partial}{\partial x} (H^U_x)^{-1}$ is continuous over $\O$. So, using Lemma~3.2 of~\cite{khalil1996nonlinear}, $(H^U_x)^{-1}$ is Lipschitz over $\O$.
\end{proof}

Assumption~\ref{assump:eigen} is additional compared to~\cite{moraal1995observer, biyik2006hybrida, biyik2006hybridb}. We explain the reason behind this assumption below. Later, in Section~\ref{sec:conv}, we discuss its relaxation.
Note that the convergence guarantee of the IPG algorithm~\cite{chakrabarti2021accelerating} assumes convex objective functions, which is equivalent to a positive semi-definite Jacobian $H^U_x$. So, to utilize the result from~\cite{chakrabarti2021accelerating}, we need $\lambda_{\min} \left[H^U_x\right] \geq 0$. In addition, $H^U_x$ is invertible under the observability Assumption~\ref{assump:obsv}, as mentioned above. So, we have assumed $\lambda_{\min} \left[H^U_x\right] > 0$ in Assumption~\ref{assump:eigen}. A similar assumption can be found in moving horizon estimation problems~\cite{alessandri2017fast} and its relation with Assumption~\ref{assump:obsv} in~\cite{hanba2010further}.

\subsection{Prior Result on Convergence of IPG Optimizer}
\label{sub:prior}

We review below in Lemma~\ref{lem:ipg_conv} a prior result from~\cite{chakrabarti2021accelerating} that is pivotal for our key result. Consider the optimization problem $x^* \in X^* = \arg \min_{z \in \R^d} f(x)$. 
\begin{lemma}[Theorem~2 and Proof of Theorem~1 in~\cite{chakrabarti2021accelerating}] \label{lem:ipg_conv}
Suppose that the following conditions hold:
\begin{itemize}
    \item $\mnorm{\min_{x \in \R^d} f(x)} < \infty$;
    \item $f$ is strongly convex and twice continuously differentiable over a compact convex domain $\D \subseteq \R^d$;
    \item the gradient $\nabla f$ is $l_f$-Lipschitz continuous over $\D$;
    \item the Hessian $\nabla^2 f$ is $\gamma_f$-Lipschitz continuous over $\D$;
\end{itemize}
Consider the centralized counterpart of the IPG algorithm in~\cite{chakrabarti2021accelerating}, with parameters $\beta = 0$, $\delta = 1$, and $\alpha(t) < \frac{1}{\lambda_{\max} \left[\nabla^2 f(x(t)) \right]}$ for each iteration $t \geq 0$. Then, 
$\rho_f = \sup_{t \geq 0} \norm{I - \alpha(t) \nabla^2 f(x(t))} < 1$. Let the initial estimate $x(0) \in \D$ and the initial preconditioner $K(0) \in \R^{d \times d}$ be chosen such that 
\begin{align}
    \frac{\eta_f \gamma_f}{2} \hspace{-0.2em} \norm{x(0) - x^*} \hspace{-0.2em} + \hspace{-0.2em} l_f \hspace{-0.2em} \norm{K(0) - (\nabla^2 f(x^*))^{-1}} \hspace{-0.2em} \leq \hspace{-0.2em} \frac{1}{2\mu_f}, \label{eqn:ipg_initial}
\end{align}
where $1 < \mu_f < \frac{1}{\rho_f}$ and $\eta_f = \norm{(\nabla^2 f(x^*))^{-1}}$. If for $t \geq 0$, 
$$\alpha(t) < \min \left\{\frac{1}{\lambda_{\max} \left[\nabla^2 f(x(t)) \right]},\frac{\mu_f^t (1-\mu_f \rho_f)}{2l_f (1-(\mu_f \rho_f)^{t+1})} \right\},$$ then we obtain that, for each iteration $t \geq 0$,
\begin{align*}
    & \norm{x(t+1) - x^*} \leq \frac{1}{\mu_f} \norm{x(t) - x^*}, \\
    & \norm{K(t+1) - (\nabla^2 f(x^*))^{-1}} \leq \rho_f^{t+1} \norm{K(0) - (\nabla^2 f(x^*))^{-1}} \\
    & + \eta_f \gamma_f \alpha(t) (\norm{x(t)-x^*} + \ldots + \rho_f^t \norm{x(0)-x^*}).
\end{align*}
\end{lemma}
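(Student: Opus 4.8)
The plan is to run a coupled induction on the two error sequences $e_x(t) = \norm{x(t)-x^*}$ and $e_K(t) = \norm{K(t) - (\nabla^2 f(x^*))^{-1}}$, tracking how the iterate and the preconditioner approach $x^*$ and $(\nabla^2 f(x^*))^{-1}$ simultaneously. I would first dispose of the claim $\rho_f < 1$: by strong convexity and twice differentiability over the compact set $\D$, every eigenvalue of $\nabla^2 f(x(t))$ lies in $[m_f, l_f]$ with $m_f > 0$, so under $0 < \alpha(t) < 1/\lambda_{\max}[\nabla^2 f(x(t))]$ the eigenvalues $1 - \alpha(t)\lambda_i$ of $I - \alpha(t)\nabla^2 f(x(t))$ all fall in $(0,1)$; compactness and the step-size schedule then make this bound uniform, giving $\rho_f < 1$.

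Next I would derive the preconditioner recursion. Writing the $K$-update with $\beta = 0$ as $K(t+1) = (I - \alpha(t)\nabla^2 f(x(t)))K(t) + \alpha(t) I$ and subtracting $(\nabla^2 f(x^*))^{-1}$, a direct rearrangement gives
\begin{align*}
K(t+1) - (\nabla^2 f(x^*))^{-1} &= (I - \alpha(t)\nabla^2 f(x(t)))(K(t) - (\nabla^2 f(x^*))^{-1}) \\
&\quad + \alpha(t)(\nabla^2 f(x^*) - \nabla^2 f(x(t)))(\nabla^2 f(x^*))^{-1}.
\end{align*}
Taking norms and using $\norm{I - \alpha(t)\nabla^2 f(x(t))} \leq \rho_f$, the Hessian-Lipschitz bound $\norm{\nabla^2 f(x^*) - \nabla^2 f(x(t))} \leq \gamma_f e_x(t)$, and $\eta_f = \norm{(\nabla^2 f(x^*))^{-1}}$, yields $e_K(t+1) \leq \rho_f e_K(t) + \alpha(t)\gamma_f\eta_f e_x(t)$, which unrolls into the second displayed bound (the factored $\alpha(t)$ using that the step sizes are nondecreasing).

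For the state error I would use $\nabla f(x^*) = 0$ and the fundamental theorem of calculus to write $\nabla f(x(t)) = \bar H(t)(x(t)-x^*)$ with $\bar H(t) = \int_0^1 \nabla^2 f(x^* + s(x(t)-x^*))\,ds$, so that the update $x(t+1) = x(t) - K(t)\nabla f(x(t))$ (with $\delta = 1$) gives $x(t+1) - x^* = (I - K(t)\bar H(t))(x(t)-x^*)$. Splitting $I - K(t)\bar H(t) = (I - K(t)\nabla^2 f(x^*)) - K(t)(\bar H(t) - \nabla^2 f(x^*))$, factoring $\nabla^2 f(x^*)$ out of the first term (bounded by $l_f e_K(t)$ since $\norm{\nabla^2 f(x^*)} \leq l_f$), and bounding $\norm{\bar H(t) - \nabla^2 f(x^*)} \leq \frac{\gamma_f}{2}e_x(t)$ together with $\norm{K(t)} \leq \eta_f + e_K(t)$, I would obtain $\norm{I - K(t)\bar H(t)} \leq l_f e_K(t) + \frac{\eta_f\gamma_f}{2}e_x(t) + \frac{\gamma_f}{2}e_K(t)e_x(t)$. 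The goal is to force this below $1/\mu_f$, which delivers the first inequality.

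The crux, and the step I expect to be the main obstacle, is closing the induction, because the two recursions are mutually dependent: the contraction of $e_x$ needs $e_K$ small so that $K(t)\bar H(t) \approx I$, while the decay of $e_K$ is driven by the progress of $e_x$. I would proceed by strong induction, assuming $e_x(s) \leq \mu_f^{-s}e_x(0)$ for all $s \leq t$ and proving the one-step contraction at $t$. Substituting this geometric decay into the unrolled $K$-recursion bounds $e_K(t)$ by $\rho_f^t e_K(0)$ plus a convolution $\gamma_f\eta_f\sum_{s<t}\alpha(s)\rho_f^{t-1-s}e_x(s)$; the ceiling $\alpha(t) < \frac{\mu_f^t(1-\mu_f\rho_f)}{2l_f(1-(\mu_f\rho_f)^{t+1})}$ is engineered precisely so that the geometric growth $\mu_f^s$ of $\alpha(s)$ cancels the decay $\mu_f^{-s}$ of $e_x(s)$, turning the convolution into a bounded geometric series (here $\mu_f\rho_f < 1$ is essential). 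Combined with the initialization budget~\eqref{eqn:ipg_initial}, which reads $\frac{\eta_f\gamma_f}{2}e_x(0) + l_f e_K(0) \leq \frac{1}{2\mu_f}$, this keeps $l_f e_K(t) + \frac{\eta_f\gamma_f}{2}e_x(t) \leq \frac{1}{2\mu_f}$, the remaining factor-of-two slack absorbing the higher-order term $\frac{\gamma_f}{2}e_K(t)e_x(t)$. Hence $\norm{I - K(t)\bar H(t)} \leq 1/\mu_f$, giving $e_x(t+1) \leq \mu_f^{-1}e_x(t)$ and completing the induction.
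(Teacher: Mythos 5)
You should first be aware that this paper contains no proof of Lemma~\ref{lem:ipg_conv} to compare against: the lemma is imported verbatim from Theorem~2 and the proof of Theorem~1 of~\cite{chakrabarti2021accelerating} and is used here as a black box. Judged on its own, your reconstruction follows the route that the stated bounds essentially force, and it is the standard argument for such preconditioned Newton-type schemes: the identity $K(t+1)-(\nabla^2 f(x^*))^{-1}=(I-\alpha(t)\nabla^2 f(x(t)))(K(t)-(\nabla^2 f(x^*))^{-1})+\alpha(t)(\nabla^2 f(x^*)-\nabla^2 f(x(t)))(\nabla^2 f(x^*))^{-1}$ is exactly what produces the displayed preconditioner bound, and the splitting $I-K(t)\bar H(t)=(I-K(t)\nabla^2 f(x^*))-K(t)(\bar H(t)-\nabla^2 f(x^*))$ with the integral mean-value Hessian, closed by the coupled induction you describe, is what produces the $\tfrac{1}{\mu_f}$ contraction. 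Three details deserve attention before calling the argument complete. First, pulling a single $\alpha(t)$ out of the convolution $\sum_{s\le t}\rho_f^{t-s}\alpha(s)\norm{x(s)-x^*}$ requires $\alpha(s)\le\alpha(t)$ for $s\le t$, which is not among the hypotheses; the honest unrolled bound keeps $\alpha(s)$ inside the sum (arguably an imprecision of the transcribed statement rather than of your argument, but you should say which one you are proving). Second, $\rho_f=\sup_{t\ge 0}\norm{I-\alpha(t)\nabla^2 f(x(t))}<1$ as a supremum over infinitely many $t$ additionally needs $\alpha(t)$ bounded away from zero; you gesture at ``the step-size schedule'' but the stated hypotheses only give an upper bound on $\alpha(t)$. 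Third, the claim that the factor-of-two slack in~\eqref{eqn:ipg_initial} absorbs the cross term $\tfrac{\gamma_f}{2}e_K(t)e_x(t)$ is asserted rather than checked; it does go through, since $\tfrac{\gamma_f}{2}e_Ke_x=\tfrac{1}{\eta_f l_f}\bigl(\tfrac{\eta_f\gamma_f}{2}e_x\bigr)\bigl(l_f e_K\bigr)\le\tfrac{1}{4\mu_f^2}\le\tfrac{1}{4\mu_f}$ using $\eta_f l_f\ge 1$ and $\mu_f>1$, so the total stays below $\tfrac{1}{\mu_f}$ --- but this is exactly where such arguments hide their constants and it should be written out rather than waved through.
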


The second condition in Lemma~\ref{lem:ipg_conv} is equivalent to  $\nabla^2 f$ having positive eigenvalues and implies that the set of minima $X^*$ is a singleton. The above result implies 
{\em linear} convergence of the IPG algorithm in~\cite{chakrabarti2021accelerating} for iteratively minimizing a cost function $f$. A sufficient condition of this convergence guarantee is local initialization of $x(0)$ and $K(0)$ within a neighborhood of $x^*$ and $\nabla^2 f(x^*))^{-1}$, where the radius of the neighborhood has a trade-off with the convergence rate $\frac{1}{\mu_f}$ (see~\eqref{eqn:ipg_initial}). Moreover, $\rho_f$ determines the convergence rate of the pre-conditioner $K(t)$ and the convergence rate $\frac{1}{\mu_f}$ of $x(t)$ is lower-bounded by $\rho_f$. In other words, the larger is stepsize $\alpha(t)$, the faster is the convergence of $K(t)$ (smaller $\rho_f$), which leads to faster convergence of $x(t)$ (smaller lower-bound of $\mu_f$).

Since, at each sampling instant $k \geq N$, the IPG observer~\eqref{eqn:K_sq}-\eqref{eqn:ipg_init} executes $d$ number of IPG iterations from~\cite{chakrabarti2021accelerating} and the convergence guarantee of IPG iterations is presented in Lemma~\ref{lem:ipg_conv}, it makes sense to analyze IPG observer's convergence by invoking Lemma~\ref{lem:ipg_conv} at each time instant $k$. Now, to invoke the result of Lemma~\ref{lem:ipg_conv}, we need to show that the conditions of Lemma~\ref{lem:ipg_conv} hold true at each sampling instant $k$ of the IPG observer~\eqref{eqn:K_sq}-\eqref{eqn:ipg_init}. However, from the fixed point of IPG observer dynamics~\eqref{eqn:K_sq}-\eqref{eqn:ipg_sq}, $K_k^{(d)}$ is a finite-time estimate of $(H_x^U)^{-1}(x_{k-N+1})$ and $K_{k+1}^{(0)} = K_k^{(d)}$ is the initial estimate of $(H_x^U)^{-1}(x_{k-N+2})$. This leads to non-descent direction in~\eqref{eqn:ipg_sq} at the beginning $i=0$ of each sampling instant $k$ due to the difference $\norm{(H_x^U)^{-1}(x_{k-N+2}) - (H_x^U)^{-1}(x_{k-N+1})}$. Thus, the key challenge here lies in proving the local initialization~\eqref{eqn:ipg_initial} at each sampling instant $k$ due to (i) premature termination of IPG iterations after a finite $d$ number of iterations, and (ii) the initial value of the pre-conditioner $K_{k+1}^{(0)}$ at any sampling instant $k+1$ not necessarily resulting in a descent direction of $w_k^{(i)}$ as $K_{k+1}^{(0)}$ is initialized without utilizing any information of the system~\eqref{eqn:meas_eqn}. Our analysis of IPG observer in the next section addresses this challenge while invoking Lemma~\ref{lem:ipg_conv}.
\section{Convergence of IPG Observer}
\label{sec:conv}

In this section, we present our key result on the convergence of IPG observer, described in~\eqref{eqn:K_sq}-\eqref{eqn:ipg_init}, for the case $n=Np$. Following~\cite{moraal1995observer}, to reduce the notational burden, we present our analysis for the special case of~\eqref{eqn:meas_eqn} with no inputs. Due to compactness in Assumption~\ref{assump:invar}, the steps for the system with inputs are the same. Accordingly, we remove the superscripts $u, u_k, U, U_k$ from all the notations ($F^{u_k}, H^{U_k}$ etc.) related to inputs in this section. For the case of no inputs, Assumption~\ref{assump:invar} becomes $F(\O) \subset \O$ and~\eqref{eqn:meas_vec_full} becomes
\begin{align}
    Y_k & = \begin{bmatrix} h(x_{k-N+1}) \\ \vdots \\ h \circ F^{(N-1)} (x_{k-N+1}) \end{bmatrix}
    =: H(x_{k-N+1}). \label{eqn:meas_vec}
\end{align}

We introduce the following notation. For the sampling instant $k=N$, we denote $\rho_{N} := \sup_{i \geq 0} \norm{I - \alpha^{(i)} H_x (w_{N}^{(i)})}$,
which characterize the rate of convergence of the pre-conditioner $K_N^{(i)}$ in the IPG iterations within the $N$-th sampling instant. Similarly, we denote $\rho = \sup_{i\geq 0, k \geq 0} \norm{I - \alpha^{(i)} H_x (w_{N+k}^{(i)})}$,
which characterize the supremum of rates of convergence of the pre-conditioner $K_{k}^{(i)}$ in the IPG iterations within the $k$-th sampling instant, over all $k \geq N$. Under Assumption~\ref{assump:obsv}, $(H_x)^{-1}$ is continuous; and under Assumption~\ref{assump:invar}, state trajectory of~\eqref{eqn:meas_eqn} is a subset of the compact set $\O$. Thus, we can define the finite quantities
\[\eta = \sup_{k \geq 1} \norm{(H_x (F(x_{k})))^{-1}}, C_k = \norm{x_{k+1} - F(x_{k+1})}.\]
We denote the initial estimation error $\delta = \norm{w_N^{(0)} - x_1}$
and estimation error after $d$ iterations at first sampling instant
$\overline{\delta} = \norm{w_N^{(d)} - x_1}$.

The following lemma states that the supremum of convergence rates of $K_{k}^{(i)}$ in the IPG iterations within $k$-th sampling instant is less than one, which will be essential in proving {\em linear} convergence of $\hat{x}_k$ in our main result afterwards.

\begin{lemma} \label{lem:rho}
Suppose that Assumptions~\ref{assump:invar}-\ref{assump:eigen} hold. Consider the IPG observer in~\eqref{eqn:K_sq}-\eqref{eqn:ipg_init}, with $\alpha^{(i)} < \frac{1}{\Lambda}$ for iterations $i = 0,\ldots,d-1$ at each $k\geq N$. Then, $\rho < 1$.
\end{lemma}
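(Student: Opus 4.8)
The plan is to bound $\norm{I - \alpha^{(i)} H_x(w)}$ uniformly over every iteration index $i$ and every evaluation point $w = w_{N+k}^{(i)}$ produced by the observer, by controlling the spectrum of $H_x$. First I would fix a compact convex set $W \subseteq \R^n$ containing all the iterates $w_{N+k}^{(i)}$; this is legitimate because the true trajectory stays in the compact convex $\O$ by Assumption~\ref{assump:invar} and the analysis is local, so that Lemma~\ref{lem:assump} applies on $W$. On $W$, Lemma~\ref{lem:assump} gives $\lambda_{\max}[H_x(w)] \leq \Lambda$, while Assumption~\ref{assump:eigen} gives $\lambda_{\min}[H_x(w)] > 0$ pointwise. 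Since the entries of $H_x$ are continuous in $w$ (Assumption~\ref{assump:diff}) and eigenvalues depend continuously on the entries, the map $w \mapsto \lambda_{\min}[H_x(w)]$ is continuous; compactness of $W$ then makes the infimum $\lambda_- := \inf_{w \in W}\lambda_{\min}[H_x(w)]$ attained, hence $\lambda_- > 0$. Thus the spectrum of $H_x(w)$ lies in $[\lambda_-,\Lambda]$ with $0 < \lambda_- \leq \Lambda$ for all $w \in W$.

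Next I would convert this spectral bound into a pointwise norm bound. For any $w \in W$ and any $i$, the eigenvalues of $I - \alpha^{(i)} H_x(w)$ are $1 - \alpha^{(i)}\lambda$ with $\lambda \in [\lambda_-,\Lambda]$. The hypothesis $\alpha^{(i)} < \tfrac{1}{\Lambda}$ forces $1 - \alpha^{(i)}\Lambda > 0$, while $\alpha^{(i)}\lambda_- > 0$ forces $1 - \alpha^{(i)}\lambda_- < 1$, so every eigenvalue of $I - \alpha^{(i)} H_x(w)$ lies strictly inside $(0,1)$. Under Assumption~\ref{assump:eigen} the matrix $H_x$ is positive definite, and in the symmetric setting that underlies the convex-objective interpretation used to invoke Lemma~\ref{lem:ipg_conv}, the operator norm $\norm{\cdot}$ is governed by the largest absolute eigenvalue; this yields the strict pointwise inequality $\norm{I - \alpha^{(i)} H_x(w)} < 1$ for every $w \in W$ and every $i \in \{0,\ldots,d-1\}$.

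Finally, to upgrade the pointwise strict inequality to a strict bound on the supremum, I would exploit continuity on the compact set $W$ together with the finiteness of the iteration index set. For each fixed $i$, the map $w \mapsto \norm{I - \alpha^{(i)} H_x(w)}$ is continuous on the compact $W$, hence attains a maximum that is strictly below $1$ by the previous step. Taking the maximum over the finitely many indices $i$ and using that every $w_{N+k}^{(i)} \in W$ gives $\rho \leq \max_{0 \leq i \leq d-1}\max_{w \in W}\norm{I - \alpha^{(i)} H_x(w)} < 1$.

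The main obstacle is the uniformity over the infinitely many sampling instants $k$: a merely pointwise bound strictly below $1$ does not by itself preclude the supremum from equalling $1$. The crux is therefore (i) guaranteeing that all iterates live in a single fixed compact convex set $W$ so that both the upper bound $\Lambda$ and, more delicately, a \emph{uniform} strictly positive lower bound $\lambda_-$ on the smallest eigenvalue hold simultaneously for every $k$, and (ii) using continuity on this compact $W$ to replace the open-ended supremum by an attained maximum that inherits the strict inequality. Promoting the pointwise positivity of Assumption~\ref{assump:eigen} to the uniform lower bound $\lambda_- > 0$ is the one step that genuinely relies on compactness.
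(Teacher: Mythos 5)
Your proposal is correct and rests on the same core argument as the paper's proof: Assumption~\ref{assump:eigen} gives $\lambda_{\min}[H_x] > 0$, Lemma~\ref{lem:assump} gives $\lambda_{\max}[H_x] \leq \Lambda$, and $\alpha^{(i)} < \frac{1}{\Lambda}$ then places the spectrum of $I - \alpha^{(i)} H_x(w)$ strictly inside $(0,1)$, whence $\norm{I - \alpha^{(i)} H_x(w_{N+k}^{(i)})} < 1$ for every $i,k$. The difference is one of rigor rather than route: the paper stops at this pointwise bound and immediately declares $\rho < 1$, whereas you correctly observe that a strict inequality at each of infinitely many pairs $(i,k)$ does not by itself bound the supremum away from $1$, and you close that gap by confining the iterates to a compact set $W$, extracting a uniform lower bound $\lambda_- = \inf_{w \in W}\lambda_{\min}[H_x(w)] > 0$ by continuity, and replacing the supremum by an attained maximum over $W$ and the finitely many iteration indices. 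This uniformity step is the right fix, but note that it leans on the premise that every iterate $w_{N+k}^{(i)}$ lies in a single fixed compact set, which Assumption~\ref{assump:invar} guarantees only for the true trajectory; in the paper that containment is implicit in the local analysis of Theorem~\ref{thm:conv}, where the iterates are kept in balls $B(x_{k+1},\frac{1}{\mu\eta\gamma})$ around the trajectory, so you should state it as an explicit hypothesis rather than assert it. You also rightly flag that inferring $\norm{I - \alpha^{(i)}H_x} < 1$ from eigenvalue bounds alone requires $H_x$ to be symmetric (or normal); for a general Jacobian with positive eigenvalues only the spectral radius of $I - \alpha^{(i)}H_x$ is controlled, a subtlety the paper's one-line conclusion elides as well.
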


\begin{proof}
Under Assumption~\ref{assump:eigen}, $\lambda_{\min} \left[H_x (x) \right] > 0, \, \forall x \in \R^n$. Under Assumptions~\ref{assump:invar}-\ref{assump:diff}, from Lemma~\ref{lem:ipg_conv} we have $\lambda_{\max} \left[H_x (x) \right] \leq \Lambda$. So, $\alpha^{(i)} < \frac{1}{\Lambda}$ implies that $\alpha^{(i)} < \frac{1}{\lambda_{\max} \left[H_x (x) \right]}, \, \forall x \in \R^n$. Then, it follows that $\norm{I - \alpha^{(i)} H_x (w_{N+k}^{(i)})} < 1, \, \forall i,k$. The proof is complete.
\end{proof}

\begin{theorem}
\label{thm:conv}
Consider the dynamical system~\eqref{eqn:meas_eqn} with no inputs, and IPG observer in~\eqref{eqn:K_sq}-\eqref{eqn:ipg_init} with $\alpha^{(i)} < \frac{1}{\Lambda}$. Suppose Assumptions~\ref{assump:invar}-\ref{assump:eigen} and the following conditions hold true:
\begin{enumerate}[label = (\roman*), wide, labelwidth=!, labelindent=0pt]
    \item number of iterations $d \geq \max\{1, 1 + \log_{\mu} L, (N-1) \log_{\mu} L\}$;
    \item $w_N^{(0)} \in \R^n$ and $K_N^{(0)} \in \R^{n \times n}$ be initialized such that
    \begin{align}
        \frac{\eta \gamma \delta}{2} + l \norm{K_{N}^{(0)} - (H_x (x_{1}))^{-1}} \leq \frac{1}{2\mu}, \label{eqn:init_cond_thm}
    \end{align}
    where $1 < \mu < \frac{1}{\rho}$;
    \item the step-size parameter 
    \begin{align}
        \alpha^{(i)} < \min \left\{\frac{1}{\Lambda}, \min\left\{\varrho, D_2\right\} \frac{\mu^{i} (1-\mu \rho)}{2l (1-(\mu \rho)^{i+1})} \right\}, \forall i \geq 0, \nonumber
    \end{align}
    where $\overline{\delta} < \frac{\delta}{L}$, $\varrho < (1-\rho)$, $D_2 < \frac{\eta \gamma (1 - L \frac{\overline{\delta}}{\delta})}{2l}$ and $\frac{1}{\mu} < 1 - \varrho$;
    \item for $k \geq 1$: 
    \begin{align}
        & \frac{l C_k L_2}{L \overline{\delta}} \leq \frac{1}{2\mu L \overline{\delta}}(1-\rho^{d}) +  \frac{1}{\mu^{k-1}} (\rho^{d} \frac{\eta \gamma}{2} - \varrho \frac{\eta \gamma}{2} - \frac{\eta \gamma}{2} \frac{1}{\mu}); \nonumber
    \end{align}
    \item for $k=0$:
    \begin{align}
        & l C_0 L_2
        \leq (1-\rho_N^{d}) (\frac{1}{2\mu} - \frac{\eta \gamma \delta}{2}) + \delta (\frac{\eta \gamma}{2} - \frac{\eta \gamma L \overline{\delta}}{2\delta} - l D_2); \nonumber
    \end{align}
\end{enumerate}
Then, we obtain
\begin{align}
    \norm{\hat{x}_{N+k} - x_{N+k}} & \leq \frac{1}{\mu^{k}} \norm{w_{N}^{(0)} - x_{1}}, \forall k \geq 0. \label{eqn:obsv_conv}
\end{align}
\end{theorem}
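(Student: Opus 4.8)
The plan is to view the $d$ IPG iterations \eqref{eqn:K_sq}--\eqref{eqn:ipg_sq} run within each sampling instant $k\ge N$ as one instance of the IPG optimizer analyzed in Lemma~\ref{lem:ipg_conv}, and then to chain these instances together by induction on $k$. The dictionary is as follows: at instant $k$ the observability mapping \eqref{eqn:meas_vec} makes $x_{k-N+1}$ the unique root of $H(\cdot)=Y_k$, so I identify the minimizer $x^*$ with $x_{k-N+1}$, the gradient $\nabla f$ with the residual $H(w)-Y_k$, and the Hessian $\nabla^2 f$ with the Jacobian $H_x$; consequently $l_f,\gamma_f,\eta_f$ correspond to the constants $l,\gamma,\eta$ furnished by Lemma~\ref{lem:assump}, and $\rho_f$ to $\rho$, which Lemma~\ref{lem:rho} guarantees is $<1$ whenever $\alpha^{(i)}<1/\Lambda$. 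Under this identification the per-instant recursion is exactly the centralized IPG recursion with $\beta=0,\delta=1$, so Lemma~\ref{lem:ipg_conv} applies verbatim provided its initialization condition \eqref{eqn:ipg_initial} holds at that instant.

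The base case is instant $N$, where \eqref{eqn:ipg_initial} is precisely the assumed condition \eqref{eqn:init_cond_thm}. Invoking Lemma~\ref{lem:ipg_conv} then yields, after the $d$ iterations, the contraction $\norm{w_N^{(d)}-x_1}\le \mu^{-d}\norm{w_N^{(0)}-x_1}$ together with the accompanying bound on the terminal preconditioner error $\norm{K_N^{(d)}-(H_x(x_1))^{-1}}$. The crux is the inductive step: assuming \eqref{eqn:ipg_initial} holds at instant $k$ (so Lemma~\ref{lem:ipg_conv} supplies both the estimate contraction and the preconditioner bound there), I must re-verify \eqref{eqn:ipg_initial} at instant $k+1$ for the warm-started quantities produced by \eqref{eqn:newton_init} and \eqref{eqn:ipg_init}.

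To verify the step I would bound the two terms of \eqref{eqn:ipg_initial} at instant $k+1$ separately. For the estimate term, \eqref{eqn:newton_init} gives $w_{k+1}^{(0)}=F(w_k^{(d)})$ while the new target is $x_{k-N+2}=F(x_{k-N+1})$, so Lipschitz continuity of $F$ (Lemma~\ref{lem:assump}) yields $\norm{w_{k+1}^{(0)}-x_{k-N+2}}\le L\,\norm{w_k^{(d)}-x_{k-N+1}}$, which the inductive contraction bounds by $L\mu^{-d}\norm{w_k^{(0)}-x_{k-N+1}}$. For the preconditioner term, \eqref{eqn:ipg_init} gives $K_{k+1}^{(0)}=K_k^{(d)}$, which targets $(H_x(x_{k-N+1}))^{-1}$ rather than $(H_x(x_{k-N+2}))^{-1}$; the triangle inequality splits the error into the inductive terminal bound on $\norm{K_k^{(d)}-(H_x(x_{k-N+1}))^{-1}}$ plus the state-drift term $\norm{(H_x(x_{k-N+1}))^{-1}-(H_x(x_{k-N+2}))^{-1}}\le L_2\norm{x_{k-N+1}-F(x_{k-N+1})}$, a quantity of the form $L_2 C_k$ controlled via Lipschitz continuity of $(H_x)^{-1}$ (Lemma~\ref{lem:assump}). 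Feeding these two bounds into \eqref{eqn:ipg_initial} is where conditions (i)--(v) are consumed: the branch $d\ge 1+\log_\mu L$ of (i) forces $L\mu^{-d}\le \mu^{-1}$ so the estimate part contracts net of the forward-propagation amplification, while the drift inequalities (iv)--(v) keep the extra $L_2 C_k$ and the premature-termination residuals $\rho^{d},\rho_N^{d}$ inside the budget $1/(2\mu)$.

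Closing the induction, the per-instant recursion $\norm{w_{N+k}^{(0)}-x_{k+1}}\le L\mu^{-d}\norm{w_{N+k-1}^{(0)}-x_k}\le \mu^{-1}\norm{w_{N+k-1}^{(0)}-x_k}$ unrolls to the geometric decay $\norm{w_{N+k}^{(0)}-x_{k+1}}\le \mu^{-k}\norm{w_N^{(0)}-x_1}$, hence $\norm{w_{N+k}^{(d)}-x_{k+1}}\le \mu^{-d-k}\norm{w_N^{(0)}-x_1}$; the final propagation \eqref{eqn:newton_est} is $\hat{x}_{N+k}=F^{(N-1)}(w_{N+k}^{(d)})$ against $x_{N+k}=F^{(N-1)}(x_{k+1})$, so one more Lipschitz bound contributes the factor $L^{N-1}$, absorbed by $L^{N-1}\mu^{-d}\le 1$ from the $(N-1)\log_\mu L$ branch of (i), giving \eqref{eqn:obsv_conv}. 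The main obstacle is precisely the inductive verification of \eqref{eqn:ipg_initial} at every instant: because only finitely many iterations run per instant the preconditioner is never exact, and because it is carried over unchanged it points at the wrong Jacobian, injecting the non-descent drift $L_2 C_k$; balancing $d$ large enough for net contraction against $C_k$ small enough to keep the warm start admissible is the delicate bookkeeping encoded in (i)--(v).
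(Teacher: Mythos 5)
Your proposal follows essentially the same route as the paper's proof: an induction over sampling instants in which Lemma~\ref{lem:ipg_conv} is invoked once per instant, the warm-started initialization \eqref{eqn:ipg_initial} is re-verified by splitting the estimate term (Lipschitz $F$, with the factor $L$ absorbed via $d \ge 1+\log_\mu L$) from the preconditioner term (terminal bound plus the drift $L_2 C_k$ from $(H_x)^{-1}$-Lipschitzness, kept in budget by conditions (iv)--(v)), and the final $L^{N-1}$ from propagating through $F^{(N-1)}$ is absorbed by the $(N-1)\log_\mu L$ branch of (i). The only cosmetic difference is that you phrase the per-instant contraction on $w^{(0)}$ while the paper states its induction hypothesis \eqref{eqn:ind_clm_1}--\eqref{eqn:ind_clm_2} on $w^{(d)}$; the chains of inequalities are the same.
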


\begin{proof}
We prove the result using the principle of induction. The proof is divided into three parts. In the first part, we assume a certain statement on the convergence of $w_{N+k}^{(d)}$ is true for sampling instants $1,\ldots,k$ and prove it for sampling instant $k+1$. In the second part, we prove the same statement for the initial sampling instant $k=1$. In the final part, we use the above convergence result on $w_{N+k}^{(d)}$ to prove convergence of the state estimates $\hat{x}_{N+k}$.

{\bf Part 1}: Suppose that the following is true for $s=1,\ldots,k$:
\begin{align}
    \frac{\eta \gamma L \overline{\delta}}{2} \frac{1}{\mu^{s-1}} + l \norm{K_{N+s}^{(0)} - (H_x (F(x_s)))^{-1}} & \leq \frac{1}{2\mu}, \label{eqn:ind_clm_1} \\
    \norm{w_{N+s}^{(d)} - x_{s+1}} \leq \frac{1}{\mu} \norm{w_{N+s-1}^{(d)} - x_{s}}. \label{eqn:ind_clm_2}
\end{align}
We will prove that~\eqref{eqn:ind_clm_1}-\eqref{eqn:ind_clm_2} hold for $s=k+1$. As discussed in Section~\ref{sec:prelim}, our analysis relies on invoking the initial condition~\eqref{eqn:ipg_initial} at $i=0$ of each sampling instant $k \geq N$, which is the challenging part. Eq.~\eqref{eqn:ind_clm_1} is a condition similar to~\eqref{eqn:ipg_initial} but adjusted to our IPG observer, where it will be shown later that the first term $\frac{\eta \gamma L \overline{\delta}}{2 \mu^{k-1}}$ is an upper bound of $\frac{\eta \gamma}{2} \norm{w_{N+k}^{(0)} - x_{k+1}}$. We use this bound, instead of the actual $\frac{\eta \gamma}{2} \norm{w_{N+k}^{(0)} - x_{k+1}}$, to utilize the ``expected'' convergence rate $\frac{1}{\mu}$ over the sampling instants $k \geq N$ (or, $N+s$ with $s \geq 0$). To prove that $\frac{1}{\mu}$ is indeed the convergence rate, we have included~\eqref{eqn:ind_clm_2} in our assumption~\eqref{eqn:ind_clm_1}-\eqref{eqn:ind_clm_2} which we will prove jointly using induction. We proceed as follows. 

Our first step is to prove that $\frac{L \overline{\delta}}{\mu^{k-1}}$ is an upper bound of $\norm{w_{N+k}^{(0)} - x_{k+1}}$. Upon iterating~\eqref{eqn:ind_clm_2} from $s$ to $1$,
\begin{align}
    \norm{w_{N+s}^{(d)} - x_{s+1}} \leq \frac{1}{\mu^s} \norm{w_{N}^{(d)} - x_{1}} = \frac{\overline{\delta}}{\mu^s}. \label{eqn:wk_1}
\end{align}
For $s=k-1$, then
\begin{align}
    \norm{w_{N+k-1}^{(d)} - x_{k}} \leq \frac{\overline{\delta}}{\mu^{k-1}}. \label{eqn:wk_d}
\end{align}
Upon substituting from~\eqref{eqn:newton_init} and~\eqref{eqn:meas_eqn},
\begin{align*}
    \norm{w_{N+k}^{(0)} - x_{k+1}} = \norm{F(w_{N+k-1}^{(d)}) - F(x_{k})}.
\end{align*}
Using Lemma~\ref{lem:assump}, from above we have
\begin{align*}
    \norm{w_{N+k}^{(0)} - x_{k+1}} \leq L \norm{(w_{N+k-1}^{(d)} - x_{k})}.
\end{align*}
Upon substituting above from~\eqref{eqn:wk_d},
\begin{align}
    \norm{w_{N+k}^{(0)} - x_{k+1}} < \frac{L \overline{\delta}}{\mu^{k-1}}. \label{eqn:wk_0}
\end{align}

Now, we substitute this upper bound above in~\eqref{eqn:ind_clm_1}, and obtain a condition similar to~\eqref{eqn:ipg_initial} for the IPG observer at iteration $i=0$ of sampling instant $N+k$. Specifically,
the above and~\eqref{eqn:ind_clm_1} at $s = k$ implies that
\begin{align*}
    & \frac{\eta \gamma}{2} \norm{w_{N+k}^{(0)} - x_{k+1}} + l \norm{K_{N+k}^{(0)} - (H_x (F(x_k)))^{-1}} \leq \frac{1}{2\mu}.
\end{align*}

Having proved the condition above, we are ready to invoke Lemma~\ref{lem:ipg_conv}. To show that all the conditions of Lemma~\ref{lem:ipg_conv} hold, we denote $B(y,r) = \{x \in \R^n : \norm{y-x} \leq r\}$ for $y \in \R^n$ and $r > 0$ and $\rho_{N+k} := \sup_{i \geq 0} \norm{I - \alpha^{(i)} H_x (w_{N+k}^{(i)})}$. From Lemma~\ref{lem:rho}, $\rho_{N+k} < 1$.
Thus, under Assumptions~\ref{assump:obsv}-\ref{assump:eigen} and Lemma~\ref{lem:assump} with $W = \D = B(x_{k+1},\frac{1}{\mu \eta \gamma})$, if $\alpha^{(i)} < \min \left\{\frac{1}{\Lambda},\frac{\mu^{i} (1-\mu \rho_{N+k})}{2l (1-\mu \rho_{N+k})^{i+1}}\right\}$, the conditions of Lemma~\ref{lem:ipg_conv} holds. So, from Lemma~\ref{lem:ipg_conv} we have for $i=d-1$,
\begin{align}
    & \norm{K_{N+k}^{(d)} \hspace{-0.3em} - \hspace{-0.3em} (H_x (x_{k+1}))^{-1}} \hspace{-0.3em} \leq \hspace{-0.3em} \rho_{N+k}^{d} \hspace{-0.3em} \norm{K_{N+k}^{(0)} \hspace{-0.3em} - \hspace{-0.3em} (H_x (x_{k+1}))^{-1}} \nonumber \\
    & + \eta \gamma \alpha^{(d-1)} (\norm{w_{N+k}^{(d-1)} - x_{k+1}} + \rho_{N+k} \norm{w_{N+k}^{(d-2)} - x_{k+1}} \nonumber \\
    & + \ldots + \rho_{N+k}^{d-1} \norm{w_{N+k}^{(0)} - x_{k+1}}), \label{eqn:K_1} \\
    & \norm{w_{N+k}^{(i)} - x_{k+1}} \leq \frac{1}{\mu} \norm{w_{N+k}^{(i-1)} - x_{k+1}}, \forall i\geq 0. \label{eqn:w_1}
\end{align}

The rest of Part-I involves algebraic manipulation and using the results derived above to prove~\eqref{eqn:ind_clm_1}-\eqref{eqn:ind_clm_2} for $s=k+1$. First, we obtain a bound for $\norm{K_{N+s}^{(0)} - (H_x (F(x_s)))^{-1}}$ at $s=k+1$, using~\eqref{eqn:K_1}-\eqref{eqn:w_1}, as follows.
Upon iterating~\eqref{eqn:w_1},
\begin{align*}
    \norm{w_{N+k}^{(i)} - x_{k+1}} \leq \frac{1}{\mu^{i}} \norm{w_{N+k}^{(0)} - x_{k+1}}, \forall i\geq 0.
\end{align*}
To replace each $\norm{w_{N+k}^{(i)} - x_{k+1}}$ in~\eqref{eqn:K_1} with their upper bound in terms of $\norm{w_{N+k}^{(0)} - x_{k+1}}$, i.e., $i=0$, we substitute from above in~\eqref{eqn:K_1}:
\begin{align}
    & \norm{K_{N+k}^{(d)} \hspace{-0.3em} - \hspace{-0.3em} (H_x (x_{k+1}))^{-1}} \hspace{-0.3em} \leq \hspace{-0.3em} \rho_{N+k}^{d} \norm{K_{N+k}^{(0)} \hspace{-0.3em} - \hspace{-0.3em} (H_x (x_{k+1}))^{-1}} \nonumber \\
    & + \eta \gamma \alpha^{(d-1)} \frac{1-(\rho_{N+k} \mu)^{d}}{\mu^{d-1}(1-\rho_{N+k} \mu)} \norm{w_{N+k}^{(0)} - x_{k+1}}. \nonumber
\end{align}
Upon substituting above from~\eqref{eqn:ind_clm_1},
\begin{align}
    & \norm{K_{N+k}^{(d)} - (H_x (x_{k+1}))^{-1}}  \leq \frac{\rho_{N+k}^{d}}{l} (\frac{1}{2\mu} - \frac{\eta \gamma L \overline{\delta}}{2} \frac{1}{\mu^{k-1}}) \nonumber \\
    & + \eta \gamma \alpha^{(d-1)} \frac{1-(\rho_{N+k} \mu)^{d}}{\mu^{d-1}(1-\rho_{N+k} \mu)} \norm{w_{N+k}^{(0)} - x_{k+1}}. \label{eqn:K_3}
\end{align}
Substituting $\norm{w_{N+k}^{(0)} - x_{k+1}}$ above in terms of $\frac{1}{\mu}$ from~\eqref{eqn:wk_0},
\begin{align}
    & \norm{K_{N+k}^{(d)} - (H_x (x_{k+1}))^{-1}}  \leq \frac{\rho_{N+k}^{d}}{l} (\frac{1}{2\mu} - \frac{\eta \gamma L \overline{\delta}}{2} \frac{1}{\mu^{k-1}}) \nonumber \\
    & + \eta \gamma \alpha^{(d-1)} \frac{1-(\rho_{N+k} \mu)^{d}}{\mu^{d-1}(1-\rho_{N+k} \mu)} \frac{L \overline{\delta}}{\mu^{k-1}}. \label{eqn:K_4}
\end{align}
From~\eqref{eqn:ipg_init}, we have
\begin{align}
    & \norm{K_{N+k+1}^{(0)} \hspace{-0.3em} - \hspace{-0.3em} (H_x (F(x_{k+1})))^{-1}} \nonumber \\
    & = \norm{K_{N+k}^{(d)} \hspace{-0.3em} - \hspace{-0.3em} (H_x (F(x_{k+1})))^{-1}} \leq \norm{K_{N+k}^{(d)} \hspace{-0.3em} - \hspace{-0.3em} (H_x (x_{k+1}))^{-1}} \nonumber \\
    & + \norm{(H_x (x_{k+1}))^{-1} \hspace{-0.3em} - \hspace{-0.3em} (H_x (F(x_{k+1})))^{-1}}. \nonumber
\end{align}
Upon substituting above from~\eqref{eqn:K_4} and using Lemma~\ref{lem:assump},
\begin{align}
    & \norm{K_{N+k+1}^{(0)} - (H_x (F(x_{k+1})))^{-1}} \nonumber \\
    & \leq \frac{\rho_{N+k}^{d}}{l} (\frac{1}{2\mu} - \frac{\eta \gamma L \overline{\delta}}{2} \frac{1}{\mu^{k-1}}) \nonumber \\
    & + \eta \gamma \alpha^{(d-1)} \frac{1-(\rho_{N+k} \mu)^{d}}{\mu^{d-1}(1-\rho_{N+k} \mu)} \frac{L \overline{\delta}}{\mu^{k-1}} + L_2 \norm{x_{k+1} - F(x_{k+1})}. \nonumber
\end{align}
Recall that $C_k = \norm{x_{k+1} - F(x_{k+1})}$. Since the state trajectory is a subset of $\O$ and $\O$ is bounded, $C_k < \infty$. 
Then, from above, the L.H.S. of~\eqref{eqn:ind_clm_1} at $s=k+1$:
\begin{align}
    & \frac{\eta \gamma L \overline{\delta}}{2} \frac{1}{\mu^{k}} + l\norm{K_{N+k+1}^{(0)} - (H_x (F(x_{k+1})))^{-1}} \nonumber \\
    & \leq \frac{\eta \gamma L \overline{\delta}}{2} \frac{1}{\mu^{k}} + \rho_{N+k}^{d} (\frac{1}{2\mu} - \frac{\eta \gamma L \overline{\delta}}{2} \frac{1}{\mu^{k-1}}) \nonumber \\
    & + \eta \gamma l \alpha^{(d-1)} \frac{1-(\rho_{N+k} \mu)^{d}}{\mu^{d-1}(1-\rho_{N+k} \mu)} \frac{L \overline{\delta}}{\mu^{k-1}} + l C_k L_2. \label{eqn:mid_1}
\end{align}
We need to show R.H.S. above is upper bounded by $\frac{1}{2\mu}$, i.e.,
\begin{align*}
    & \frac{\eta \gamma L \overline{\delta}}{2} \frac{1}{\mu^{k}} + \rho_{N+k}^{d} (\frac{1}{2\mu} - \frac{\eta \gamma L \overline{\delta}}{2} \frac{1}{\mu^{k-1}}) \\
    & + \eta \gamma l \alpha^{(d-1)} \frac{1-(\rho_{N+k} \mu)^{d}}{\mu^{d-1}(1-\rho_{N+k} \mu)} \frac{L \overline{\delta}}{\mu^{k-1}} + l C_k L_2 \leq \frac{1}{2\mu} \\
    \iff & \frac{\eta \gamma}{2} \frac{1}{\mu^{k}} + \eta \gamma l \alpha^{(d-1)} \frac{1-(\rho_{N+k} \mu)^{d}}{\mu^{d-1}(1-\rho_{N+k} \mu)} \frac{1}{\mu^{k-1}} + \frac{l C_k L_2}{L \overline{\delta}} \\
    & \leq \frac{1}{2\mu L \overline{\delta}}(1-\rho_{N+k}^{d}) + \rho_{N+k}^{d} \frac{\eta \gamma}{2} \frac{1}{\mu^{k-1}}.
\end{align*}
If $\alpha^{(i)} < D_1 \frac{\mu^{i} (1-\mu \rho_{N+k})}{\eta \gamma l (1-(\mu \rho_{N+k})^{i+1})}$ for some $D_1 > 0$, then it is enough to show that
\begin{align*}
    & \frac{\eta \gamma}{2 \mu^k} + \frac{D_1}{\mu^{k-1}} + \frac{l C_k L_2}{L \overline{\delta}} \leq \frac{1}{2\mu L \overline{\delta}}(1-\rho_{N+k}^{d}) + \rho_{N+k}^{d} \frac{\eta \gamma}{2 \mu^{k-1}},
\end{align*}
which is equivalent to
\begin{align}
    & \frac{l C_k L_2}{L \overline{\delta}} \leq \frac{1}{2\mu L \overline{\delta}}(1-\rho_{N+k}^{d}) +  \frac{1}{\mu^{k-1}} (\rho_{N+k}^{d} \frac{\eta \gamma}{2} - D_1 - \frac{\eta \gamma}{2} \frac{1}{\mu}) \nonumber \\
    & = (1-\rho_{N+k}^{d}) (\frac{1}{2\mu L \overline{\delta}} - \frac{\eta \gamma}{2 \mu^{k-1}}) + \frac{1}{\mu^{k-1}} (\frac{\eta \gamma}{2} - \frac{\eta \gamma}{2 \mu} - D_1). \label{eqn:bd_11}
\end{align}

We prove~\eqref{eqn:bd_11} as follows.
Since $\rho_{N+k}<1$ and $d\geq 1$, $(1-\rho_{N+k}^{d}) > 0$. Moreover, 
\begin{align*}
    & \frac{\eta \gamma}{2} - \frac{1}{\mu} \frac{\eta \gamma}{2} - D_1 > 0 
    \iff \frac{1}{\mu} < 1 - \frac{2 D_1}{\eta \gamma}, \, D_1 < \frac{\eta \gamma}{2}.
\end{align*}
If $D_1 < \frac{\eta \gamma}{2} (1-\rho_{N+k})$, then $\rho_{N+k} <  1 - \frac{2 D_1}{\eta \gamma}$. Since $\rho_{N+k} < 1$, the above choice of $D_1$ also implies that $D_1 < \frac{\eta \gamma}{2}$. Similarly, for the first term on the R.H.S. of~\eqref{eqn:bd_11}, 
\begin{align*}
    & \frac{1}{2\mu L \overline{\delta}} - \frac{1}{\mu^{k-1}}  \frac{\eta \gamma}{2} > 0 
    \iff \frac{1}{\mu^{k-2}} < \frac{1}{\eta \gamma L \overline{\delta}}.
\end{align*}
Since $\mu > 1$, $\frac{1}{\mu} <  \frac{1}{\eta \gamma L \overline{\delta}}$ implies that the above holds. So, $D_1 < \frac{\eta \gamma}{2} (1-\rho_{N+k})$ and $\frac{1}{\mu} < \min \left\{1 - \frac{2 D_1}{\eta \gamma}, \frac{1}{\eta \gamma L \overline{\delta}}\right\}$ implies that the R.H.S. of~\eqref{eqn:bd_11} is positive. Thus, with $D_1$ and $\mu$ chosen as above and if~\eqref{eqn:bd_11} holds, then
\begin{align}
    \frac{\eta \gamma L \overline{\delta}}{2 \mu^k} + l \norm{K_{N+k+1}^{(0)} - (H_x (F(x_{k+1})))^{-1}} \leq \frac{1}{2\mu}. \label{eqn:K_5}
\end{align}
By definition of $\rho$, we have $\rho \geq \rho_{N+k}$. Thus, $\varrho < 1 - \rho$ implies that $\varrho < 1 - \rho_{N+k}$. Thus, with $D_1$ chosen as above, condition~(iii) implies that $\alpha^{(i)} < D_1 \frac{\mu^{i} (1-\mu \rho_{N+k})}{\eta \gamma l (1-(\mu \rho_{N+k})^{i+1})}$. Since $\rho \geq \rho_{N+k}$, $D_1 < \frac{\eta \gamma}{2} \varrho \implies D_1 < \frac{\eta \gamma}{2} (1-\rho) < \frac{\eta \gamma}{2} (1-\rho_{N+k})$. Thus, the condition~(iv) implies~\eqref{eqn:bd_11}. We have proved~\eqref{eqn:ind_clm_1} for $s=k+1$.

Finally, we prove~\eqref{eqn:ind_clm_2} at $s=k+1$ as follows, for which we will need~\eqref{eqn:K_5}.
From~\eqref{eqn:wk_1},
\begin{align}
    \norm{w_{N+k}^{(d)} - x_{k+1}} \leq \frac{\overline{\delta}}{\mu^{k}}. \label{eqn:wk_d1}
\end{align}
Upon substituting from~\eqref{eqn:newton_init} and~\eqref{eqn:meas_eqn},
\begin{align*}
    \norm{w_{N+k+1}^{(0)} - x_{k+2}} = \norm{F(w_{N+k}^{(d)}) - F(x_{k+1})}.
\end{align*}
From Lemma~\ref{lem:assump} and above, we have
\begin{align}
    \norm{w_{N+k+1}^{(0)} - x_{k+2}} \leq L \norm{(w_{N+k}^{(d)} - x_{k+1})}. \label{eqn:wk_plus_d}
\end{align}
Upon substituting above from~\eqref{eqn:wk_d1},
\begin{align}
    \norm{w_{N+k+1}^{(0)} - x_{k+2}} < \frac{L \overline{\delta}}{\mu^{k}}. \label{eqn:wk_01}
\end{align}
The above and~\eqref{eqn:K_5} implies that
\begin{align*}
    & \frac{\eta \gamma}{2} \norm{w_{N+k+1}^{(0)} - x_{k+2}} \\
    & + l \norm{K_{N+k+1}^{(0)} - (H_x (F(x_{k+1})))^{-1}} \leq \frac{1}{2\mu}.
\end{align*}
Having proved the condition above, we will again invoke Lemma~\ref{lem:ipg_conv}. To show that all the conditions of Lemma~\ref{lem:ipg_conv} hold, we denote $\rho_{N+k+1} := \sup_{i \geq 0} \norm{I - \alpha^{(i)} H_x (w_{N+k+1}^{(i)})}.$
Under Assumptions~\ref{assump:obsv}-\ref{assump:eigen} and Lemma~\ref{lem:assump} with $W = \D = B(x_{k+2},\frac{1}{\mu \eta \gamma})$, if $\alpha^{(i)} < \min \left\{\frac{1}{\Lambda},\frac{\mu^{i} (1-\mu \rho_{N+k+1})}{2l (1-(\mu \rho_{N+k+1})^{i+1})}\right\}$, the conditions of Lemma~\ref{lem:ipg_conv} holds. So, from Lemma~\ref{lem:ipg_conv} we have
\begin{align}
    \norm{w_{N+k+1}^{(d)} - x_{k+2}} \leq \frac{1}{\mu^{d}} \norm{w_{N+k+1}^{(0)} - x_{k+2}}. \label{eqn:w_2}
\end{align}
Upon substituting from~\eqref{eqn:wk_plus_d} in~\eqref{eqn:w_2},
\begin{align*}
    \norm{w_{N+k+1}^{(d)} - x_{k+2}} \leq \frac{L}{\mu^{d}} \norm{(w_{N+k}^{(d)} - x_{k+1})}.
\end{align*}
If $d \geq 1 + \log_{\mu} L$,  then $\frac{L}{\mu^{d}} \leq \frac{1}{\mu}$. From above, then we have
\begin{align}
    \norm{w_{N+k+1}^{(d)} - x_{k+2}} \leq \frac{1}{\mu} \norm{(w_{N+k}^{(d)} - x_{k+1})}, \label{eqn:w_3}
\end{align}
which proves~\eqref{eqn:ind_clm_2} for $s=k+1$.

{\bf Part 2}: Next, we will prove that~\eqref{eqn:ind_clm_1}-\eqref{eqn:ind_clm_2} hold for $s=1$. We will follow the steps in Part~1 above, where our initial condition is~\eqref{eqn:init_cond_thm} from the theorem statement, instead of~\eqref{eqn:ind_clm_1}.

Our first step is to prove~\eqref{eqn:ind_clm_1} for $s=1$.
If $\alpha^{(i)} < \min \left\{\frac{1}{\Lambda},D_2 \frac{\mu^{i} (1-\mu_1 \rho_N)}{2l (1-(\mu \rho_N)^{i+1})} \right\}$ for some $D_2 > 0$, then following the steps in Part~1, with the condition~\eqref{eqn:init_cond_thm} instead of~\eqref{eqn:ind_clm_1}-\eqref{eqn:ind_clm_2}, we obtain a sufficient condition similar to~\eqref{eqn:bd_11}: it is enough to show that
\begin{align}
    & l C_0 L_2 \hspace{-0.2em}
    \leq \hspace{-0.2em} (1-\rho_N^{d}) \hspace{-0.2em} (\frac{1}{2\mu} - \frac{\eta \gamma \delta}{2}) \hspace{-0.2em} + \hspace{-0.2em} \delta (\frac{\eta \gamma}{2}  \hspace{-0.2em} - \hspace{-0.2em} \frac{\eta \gamma L \overline{\delta}}{2\delta} \hspace{-0.2em} - \hspace{-0.2em} l D_2), \label{eqn:bd_1}
\end{align}
which is condition~(v) in the theorem statement.
Since $\rho_N<1$ and $d\geq 1$, $(1-\rho_N^{d}) > 0$. Moreover,~\eqref{eqn:init_cond_thm} implies that $\frac{1}{2\mu} - \frac{\eta \gamma \delta}{2} > 0$. If $\alpha^{(i)} < \min \left\{\frac{1}{\Lambda}, \frac{\mu^{i} (1-\mu_1 \rho_{N+1})}{2l (1-(\mu \rho_{N+1})^{i+1})} \right\}$, then under the condition~\eqref{eqn:init_cond_thm}, Assumptions~\ref{assump:obsv}-\ref{assump:eigen}, and Lemma~\ref{lem:assump} with $W = \D = B(x_{1},\frac{1}{\mu \eta \gamma})$, the conditions of Lemma~\ref{lem:ipg_conv} holds. So, from Lemma~\ref{lem:ipg_conv} we have
\begin{align}
    \norm{w_{N}^{(d)} - x_{1}} \leq \frac{1}{\mu^{d}} \norm{w_{N}^{(0)} - x_{1}}, \label{eqn:w_0}
\end{align}
i.e., $\overline{\delta} < \frac{\delta}{\mu^d}$. If $d \geq \log_{\mu} L$,  then from above we have $\frac{\overline{\delta}}{\delta} < \frac{1}{L}$. For $\frac{\overline{\delta}}{\delta} < \frac{1}{L}$ and $D_2 < \frac{\eta \gamma (1 - L \frac{\overline{\delta}}{\delta})}{2l}$, the second term on the R.H.S. of~\eqref{eqn:bd_1} $\frac{\eta \gamma}{2} - \frac{\eta \gamma L C_4}{2} - l D_2 > 0$. Thus, with $D_2, d, \delta$ chosen as above and if condition~(v) holds, then 
\begin{align}
    \frac{\eta \gamma L \overline{\delta}}{2} + l \norm{K_{N+1}^{(0)} - (H_x (F(x_{1})))^{-1}} \leq \frac{1}{2\mu}, \label{eqn:K_6}
\end{align}
i.e.,~\eqref{eqn:ind_clm_1} holds for $s=1$. 

Next, to prove~\eqref{eqn:ind_clm_2} for $s=1$, we follow the steps similar to Part~1 of the proof after~\eqref{eqn:K_5}.  If $\alpha^{(i)} < \min \left\{\frac{1}{\Lambda}, \frac{\mu^{i} (1-\mu_1 \rho_{N+1})}{2l (1-(\mu \rho_{N+1})^{i+1})} \right\}$, then under the condition~\eqref{eqn:K_6}, Assumptions~\ref{assump:obsv}-\ref{assump:eigen}, and Lemma~\ref{lem:assump} with $W = \D = B(x_{2},\frac{1}{\mu \eta \gamma})$, the conditions of Lemma~\ref{lem:ipg_conv} holds. So, from Lemma~\ref{lem:ipg_conv} we have
\begin{align}
    \norm{w_{N+1}^{(d)} - x_{2}} \leq \frac{1}{\mu^{d}} \norm{w_{N+1}^{(0)} - x_{2}}. \label{eqn:w_4}
\end{align}
Upon substituting from~\eqref{eqn:newton_init} and~\eqref{eqn:meas_eqn},
\begin{align*}
    \norm{w_{N+1}^{(0)} - x_{2}} = \norm{F(w_{N}^{(d)}) - F(x_{1})}.
\end{align*}
From Lemma~\ref{lem:assump} and above, we have
\begin{align*}
    \norm{w_{N+1}^{(0)} - x_{2}} \leq L \norm{(w_{N}^{(d)} - x_{1})}.
\end{align*}
Upon substituting from above in~\eqref{eqn:w_4},
\begin{align*}
    \norm{w_{N+1}^{(d)} - x_{2}} \leq \frac{L}{\mu^{d}} \norm{(w_{N}^{(d)} - x_{1})}.
\end{align*}
If $d \geq 1 + \log_{\mu} L$,  then $\frac{L}{\mu^{d}} \leq \frac{1}{\mu}$. From above, then we have
\begin{align}
    \norm{w_{N+1}^{(d)} - x_{2}} \leq \frac{1}{\mu} \norm{(w_{N}^{(d)} - x_{1})},
\end{align}
i.e.,~\eqref{eqn:ind_clm_2} holds for $s=1$.

{\bf Part 3}: Using the principle of induction, from Part~1 and Part~2, we have proved that
\begin{align}
    \norm{w_{N+k}^{(d)} - x_{k+1}} \leq \frac{1}{\mu} \norm{w_{N+k-1}^{(d)} - x_{k}}, \, k \geq 0. \label{eqn:w_conv}
\end{align}
We have proved linear convergence of $w_{N+k}^{(d)}$. Next, we use this result~\eqref{eqn:w_conv} to show linear convergence of state estimates.

We propagate $w_{N+k}^{(d)}$ and $x_{k+1}$ forward $N-1$ times according to~\eqref{eqn:newton_est} and~\eqref{eqn:meas_eqn}, respectively:
\begin{align*}
    \norm{\hat{x}_{N+k} - x_{N+k}} & = \norm{F^{(N-1)}(w_{N+k}^{(d)}) - F^{(N-1)} (x_{k+1)}}.
\end{align*}
From Lemma~\ref{lem:assump} and above, we have
\begin{align*}
    \norm{\hat{x}_{N+k} - x_{N+k}} & \leq L^{N-1} \norm{w_{N+k}^{(d)} - x_{k+1}}.
\end{align*}
Upon substituting above from convergence of $w_{N+k}^{(d)}$ in~\eqref{eqn:w_conv},
\begin{align*}
    \norm{\hat{x}_{N+k} - x_{N+k}} & \leq \frac{L^{N-1}}{\mu} \norm{w_{N+k-1}^{(d)} - x_{k}}.
\end{align*}
Upon iterating the above from $k$ to $1$,
\begin{align}
    \norm{\hat{x}_{N+k} - x_{N+k}} & \leq \frac{L^{N-1}}{\mu^{k}} \norm{w_{N}^{(d)} - x_{1}}. \label{eqn:w_5}
\end{align}

The final step is to bound $\norm{w_{N}^{(d)} - x_{1}}$ above in terms of the initial error $\delta = \norm{w_{N}^{(0)} - x_{1}}$. It is achieved by using Lemma~\ref{lem:ipg_conv} and sufficiently large number of iterations $d$ at each sampling instant $k$ as follows. 
If $\alpha^{(i)} < \min \left\{\frac{1}{\Lambda}, \frac{\mu^{i} (1-\mu \rho)}{2l (1-(\mu \rho)^{i+1})} \right\}$, then under the condition~\eqref{eqn:init_cond_thm}, Assumptions~\ref{assump:obsv}-\ref{assump:eigen}, and Lemma~\ref{lem:assump} with $W = \D = B(x_{1},\frac{1}{\mu \eta \gamma})$, the conditions of Lemma~\ref{lem:ipg_conv} holds. So, from Lemma~\ref{lem:ipg_conv},
\begin{align*}
    \norm{w_{N}^{(d)} - x_{1}} \leq \frac{1}{\mu^{d}} \norm{w_{N}^{(0)} - x_{1}}. 
\end{align*}
Upon substituting from above in~\eqref{eqn:w_5},
\begin{align*}
    \norm{\hat{x}_{N+k} - x_{N+k}} & \leq \frac{L^{N-1}}{\mu^{k+d}} \norm{w_{N}^{(0)} - x_{1}}. 
\end{align*}
If $d \geq (N-1)\log_{\mu} L$,  then $\frac{L^{N-1}}{\mu^{d}} \leq 1$. From above,
\begin{align}
    \norm{\hat{x}_{N+k} - x_{N+k}} & \leq \frac{1}{\mu^{k}} \norm{w_{N}^{(0)} - x_{1}}. \label{eqn:x_conv}
\end{align}

From~\eqref{eqn:init_cond_thm}, we note that $\eta \gamma \delta < \frac{1}{\mu} < 1$. Also, $\frac{\overline{\delta}}{\delta} < \frac{1}{L}$. Then, $\frac{1}{\eta \gamma L \overline{\delta}} > \frac{1}{\eta \gamma \delta} > \mu > 1$. So, $\frac{1}{\mu} < \min \left\{1 - \varrho, \frac{1}{\eta \gamma L \overline{\delta}}\right\}$ is simplified to $\frac{1}{\mu} < 1 - \varrho$.
The proof is complete.
\end{proof}

Since $\mu > 1$, Theorem~\ref{thm:conv} implies that the sequence of estimates $\{\hat{x}_{k}, k\geq N\}$ in IPG observer, under the conditions of Theorem~\ref{thm:conv}, converges to the state trajectory of~\eqref{eqn:meas_eqn} at a {\em linear} rate. Below we discuss the implications and possible simplifications of the sufficient conditions~(i)-(v) above.

\begin{itemize}[wide, labelwidth=!, labelindent=0pt]
    \item {\bf Convergence rate}: Among the sufficient conditions in Theorem~\ref{thm:conv},~(i) requires the initialization of the estimate $w_N^{(0)}$ and the pre-conditioner $K_N^{(0)}$ to be within a certain neighborhood of the true initial state $x_1$ and $(H_x (x_{1}))^{-1}$, respectively. Thus, the convergence guarantee in Theorem~\ref{thm:conv} is local, similar to the Newton-type observers~\cite{moraal1995observer, biyik2006hybrida, biyik2006hybridb}. Particularly,~(i) characterizes a trade-off between this region of attraction and the convergence rate $\frac{1}{\mu}$. Moreover, from the conditions~(ii)-(iii) of Theorem~\ref{thm:conv}, $\rho < \frac{1}{\mu} < 1-\varrho$ characterizes a upper and lower bound on the convergence rate.
    \item {\bf Step-size}: Condition~(iii) states the step-size $\alpha^{(i)}$ should be small enough. We note that, by controlling the number of iterations $d$ (see (i)), $\frac{L \overline{\delta}}{\delta}$ ratio can be made smaller, leading to a larger upper bound for choosing the parameter $D_2$.
    Moreover, since $\mu > 1$ and $\mu \rho < 1$, for large enough iteration index $i$, condition~(iii) is simplified to $\alpha^{(i)} < \frac{1}{\Lambda}$.
    \item {\bf Condition (iv)}: The argument following~\eqref{eqn:K_4} required (iv)-(v) due to the pre-conditioner. The reason is as follows. From~\eqref{eqn:newton_init}, the initial estimate $w_{k+1}^{(0)}$ for the state at the next sampling instant $k+1$ is obtained by passing the previous estimate $w_k^{(d)}$ through the system dynamics $F$. On the contrary, from~\eqref{eqn:ipg_init}, the pre-conditioner $K_{k+1}^{(0)}$, which is an estimate of the inverse Jacobian $(H_x (x_{k+1}))^{-1}$ at the next sampling instant $k+1$, is obtained by directly setting it to the previous pre-conditioner $K_k^{(d)}$, which is an estimate of the the inverse Jacobian $(H_x (x_{k}))^{-1}$ at the previous instant. Since this initialization of pre-conditioner for the next instant is done without utilizing any information of the system~\eqref{eqn:meas_eqn}, the initial value of the pre-conditioner at any sampling instant need not result in a descent direction of the estimate update~\eqref{eqn:ipg_sq}. Thus, the additional error term regarding $C_k$ appears in the upper bound on the estimation error. By definition, $C_k$ is the difference between two consecutive system states at the sampling instant $k+1$. So, for asymptotically stable systems~\eqref{eqn:meas_eqn}, $\lim_{k \to \infty} C_k = 0$. Thus, for asymptotically stable systems,~(iv) will be easier to satisfy for larger values of sampling instant $k$, and will hold by default as $k \to \infty$ regardless of the other parameters in the inequality.
    \item {\bf Condition (v)}: It needs to be satisfied only at the first sampling instant. For example, one can have only the first sampling period small enough, so that the change in state $C_0 = \norm{x_1- x_0}$ is small. Thereafter, for $k>1$, the sampling period has no restriction.
\end{itemize}

{\bf Relation with Newton observer}:
Suppose the conditions in Theorem~\ref{thm:conv} hold. Since $\rho_{N+k} < 1$ and $\mu > 1$, from~\eqref{eqn:K_4}, $\lim_{d,k \to \infty} \norm{K_{N+k}^{(d)} \hspace{-0.3em} - \hspace{-0.3em} (H_x (x_{k+1}))^{-1}} = 0$, i.e., the preconditioner $K_{N+k}^{(i)}$ converges to the inverse Jacobian $H_x (x_{k+1}))^{-1}$ in the limit $i,k \to \infty$. So, the IPG observer~\eqref{eqn:K_sq}-\eqref{eqn:ipg_init} asymptotically replicates the Newton observer~\cite{moraal1995observer} under the above conditions, as hypothesized in~\cite{chakrabarti2023ipg}.

{\bf Relaxing Assumption~\ref{assump:eigen}}: 
Among the numerical examples in~\cite{chakrabarti2023ipg}, only the first and the third examples satisfy Assumption~\ref{assump:eigen}. However, the IPG observer converges in all three examples~\cite{chakrabarti2023ipg}. Thus, Assumption~\ref{assump:eigen} is not a necessary condition. In fact, this assumption can be removed by introducing an additional parameter $\beta > 0$ in the IPG observer. From Lemma~\ref{lem:assump}, the eigenvalues of $H^U_x$ are bounded. Then, one can choose $\beta > \max_{k \geq N} \mnorm{\lambda_{\min} \left[H^{U_k}_x \right]}$, in which case $\lambda_{\min} \left[H^{U_k}_x + \beta I \right] > 0, \forall k$. 
So, we replace the quantity $H^{U_k}_x (w_k^{(i)})$ in~\eqref{eqn:K_sq} with $H^{U_k}_x (w_k^{(i)}) + \beta I$. Regarding minimization of a cost $f$, from Theorem~1 in~\cite{chakrabarti2021accelerating}, we observe that {\em linear} convergence of IPG optimizer holds for non-zero $\beta$, without requiring strong convexity of $f$ as long as $\nabla^2 f + \beta I \succ 0$. Thus, instead of invoking the results from Theorem~1 in~\cite{chakrabarti2021accelerating} with $\beta = 0$, we can utilize Theorem~1 in~\cite{chakrabarti2021accelerating} with $\beta > \max_{k \geq N} \mnorm{\lambda_{\min} \left[H^{U_k}_x \right]}$; and modify the sufficient conditions in Theorem~\ref{thm:conv} by the additional non-zero parameter $\beta$, in accordance with the conditions~(8)-(9) from~\cite{chakrabarti2021accelerating}, to derive a similar result as Theorem~\ref{thm:conv} by following its presented proof in Section~\ref{sec:conv}. Since this analysis with $\beta \neq 0$ is similar, except accounting for $\beta$, we made the Assumption~\ref{assump:eigen} for a concise presentation.

\section{Conclusion}

We presented convergence analysis of the recently proposed IPG observer for a class of nonlinear systems in deterministic settings under standard assumptions in the literature. Upon utilizing a prior result, we established local {\em linear} convergence of the IPG observer by identifying a set of sufficient conditions. The practicality and possible simplifications of such conditions were discussed. Further, the IPG observer asymptotically approximates the Newton observer under certain conditions, as hypothesized in the IPG observer paper. As empirically shown in~\cite{chakrabarti2023ipg}, a key contribution of the IPG observer is its improved robustness against measurement noise than the fast Newton-type observers. Our future work will theoretically characterize the robustness of IPG observers against measurement noise. 



\bibliographystyle{unsrt}        
\bibliography{refs} 

\begin{thebibliography}{10}

\bibitem{moraal1995observer}
PE~Moraal and Jessy~W Grizzle.
\newblock Observer design for nonlinear systems with discrete-time
  measurements.
\newblock {\em IEEE Transactions on Automatic Control}, 40(3):395--404, 1995.

\bibitem{biyik2006hybrida}
Emrah B{\i}y{\i}k and Murat Arcak.
\newblock A hybrid redesign of {N}ewton observers in the absence of an exact
  discrete-time model.
\newblock {\em Systems \& Control Letters}, 55(6):429--436, 2006.

\bibitem{biyik2006hybridb}
E~Biyik and Murat Arcak.
\newblock Hybrid {N}ewton observer design using the inexact {N}ewton method and
  {GMRES}.
\newblock In {\em 2006 American Control Conference}, pages 6--pp. IEEE, 2006.

\bibitem{hanba2008numerical}
Shigeru Hanba.
\newblock Numerical nonlinear observers using pseudo-{N}ewton-type solvers.
\newblock {\em International Journal of Robust and Nonlinear Control:
  IFAC-Affiliated Journal}, 18(17):1592--1606, 2008.

\bibitem{alessandri2010advances}
Angelo Alessandri, Marco Baglietto, Giorgio Battistelli, and Victor Zavala.
\newblock Advances in moving horizon estimation for nonlinear systems.
\newblock In {\em 49th IEEE Conference on Decision and Control (CDC)}, pages
  5681--5688. IEEE, 2010.

\bibitem{alessandri2017fast}
Angelo Alessandri and Mauro Gaggero.
\newblock Fast moving horizon state estimation for discrete-time systems using
  single and multi iteration descent methods.
\newblock {\em IEEE Transactions on Automatic Control}, 62(9):4499--4511, 2017.

\bibitem{astolfi2006global}
Alessandro Astolfi and Laurent Praly.
\newblock Global complete observability and output-to-state stability imply the
  existence of a globally convergent observer.
\newblock {\em Mathematics of Control, Signals and Systems}, 18(1):32--65,
  2006.

\bibitem{niazi2023learning}
Muhammad Umar~B Niazi, John Cao, Xudong Sun, Amritam Das, and Karl~Henrik
  Johansson.
\newblock Learning-based design of {L}uenberger observers for autonomous
  nonlinear systems.
\newblock In {\em 2023 American Control Conference (ACC)}, pages 3048--3055.
  IEEE, 2023.

\bibitem{khalil2014high}
Hassan~K Khalil and Laurent Praly.
\newblock High-gain observers in nonlinear feedback control.
\newblock {\em International Journal of Robust and Nonlinear Control},
  24(6):993--1015, 2014.

\bibitem{menini2021design}
Laura Menini, Corrado Possieri, and Antonio Tornamb{\`e}.
\newblock Design of high-gain observers based on sampled measurements via the
  interval arithmetic.
\newblock {\em Automatica}, 131:109741, 2021.

\bibitem{shim2015nonlinear}
Hyungbo Shim and Daniel Liberzon.
\newblock Nonlinear observers robust to measurement disturbances in an {ISS}
  sense.
\newblock {\em IEEE Transactions on Automatic Control}, 61(1):48--61, 2015.

\bibitem{shamma1997approximate}
Jeff~S Shamma and Kuang-Yang Tu.
\newblock Approximate set-valued observers for nonlinear systems.
\newblock {\em IEEE Transactions on Automatic Control}, 42(5):648--658, 1997.

\bibitem{karafyllis2009continuous}
Iasson Karafyllis and Costas Kravaris.
\newblock From continuous-time design to sampled-data design of observers.
\newblock {\em IEEE Transactions on Automatic Control}, 54(9):2169--2174, 2009.

\bibitem{beikzadeh2016input}
Hossein Beikzadeh and Horacio~J Marquez.
\newblock Input-to-error stable observer for nonlinear sampled-data systems
  with application to one-sided {L}ipschitz systems.
\newblock {\em Automatica}, 67:1--7, 2016.

\bibitem{ahmed2020sampled}
Tarek Ahmed-Ali, Iasson Karafyllis, and Fouad Giri.
\newblock Sampled-data observers for delay systems.
\newblock {\em IFAC-PapersOnLine}, 53(2):5901--5908, 2020.

\bibitem{kashefi2022new}
Saeed Kashefi and Majid Hajatipour.
\newblock New optimal observer design for a class of nonlinear systems based on
  approximation.
\newblock {\em International Journal of Dynamics and Control}, pages 1--12,
  2022.

\bibitem{chakrabarti2023ipg}
Kushal Chakrabarti and Nikhil Chopra.
\newblock {IPG} observer: A {N}ewton-type observer robust to measurement noise.
\newblock In {\em 2023 American Control Conference (ACC)}, pages 3069--3074.
  IEEE, 2023.

\bibitem{chakrabarti2021accelerating}
Kushal Chakrabarti, Nirupam Gupta, and Nikhil Chopra.
\newblock On accelerating distributed convex optimizations.
\newblock {\em arXiv preprint arXiv:2108.08670}, 2021.

\bibitem{khalil1996nonlinear}
Hassan~K Khalil.
\newblock {\em {Nonlinear systems; 3rd ed.}}
\newblock Prentice-Hall, Upper Saddle River, NJ, 2002.

\bibitem{hanba2010further}
Shigeru Hanba.
\newblock Further results on the uniform observability of discrete-time
  nonlinear systems.
\newblock {\em IEEE Transactions on Automatic Control}, 55(4):1034--1038, 2010.

\end{thebibliography}

\addtolength{\textheight}{-12cm}

\end{document}